\newtheorem{thm}{Theorem}
\newtheorem{lm}[thm]{Lemma}
\newtheorem{deff}[thm]{Definition}
\def\blfootnote{\xdef\@thefnmark{}\@footnotetext}
\date{}
\author{MARIA DE LOURDES MERLINI GIULIANI\\ \textit{Universidade
    Federal do ABC}\\ \textit{Santo André (SP),
    09210-180 Brazil}\\ 
  {GILIARD SOUZA DOS ANJOS}\footnote{Partially supported by FAPESP, Proc. 2012/21323-0}\\  \textit{Universidade Federal do ABC} \\
  \textit{Santo André (SP), 09210-180 Brazil}\\
  CHARLES J. COLBOURN\\
\textit{School of CIDSE, Arizona State University}\\
\textit{Tempe, AZ 85287, U.S.A}
}
\title{STEINER LOOPS SATISFYING THE STATEMENT OF MOUFANG'S THEOREM}
\begin{document}

\maketitle

\begin{abstract} 
Andrew Rajah posed at the Loops'11 Conference in Trest, Czech Republic, the following conjecture: 
\textsl{Is every variety of loops that satisfies Moufang's theorem contained in the variety of Moufang loops?}
This paper is motivated by that problem. 
We give a partial answer to this question and present two types of Steiner loops, one  that satisfies Moufang's theorem and another that does not, and neither is Moufang loop.
\end{abstract}

\noindent {\it Keywords}: \: Steiner loop; Moufang loop; Moufang theorem.\\
 \noindent Mathematics Subject Classification:  20N05.

\section {INTRODUCTION}

A nonempty set $L$ with a binary operation is a \it loop \rm if there exists an identity
element $1$ with $1x = x = x1$ for every $x \in L$ and both left and right multiplication
by any fixed element of $L$ permutes every element of $L$.

A loop $L$ has the \textit{inverse property} (and is an \textsl{IP loop}), if and only if there is a bijection 
$L\longrightarrow L:x\mapsto x^{-1}$ such whenever $x,y\in L$, $x^{-1}(xy)=y=(yx)x^{-1}$. 
It can be seen that \textit{IP} loops also satisfy  $(xy)^{-1}=y^{-1}x^{-1}$. 
A \textit{Steiner loop} is an IP loop of exponent $2$.
A loop $M$  is a  \it Moufang loop \rm if it satisfies any of the following equivalent identities: 
\[ \begin{array}{rcl}
x(y(xz))&=&((xy)x)z\\
y(x(zx))&=&((yx)z)x\\
(xy)(zx)&=&x((yz)x)
\end{array} \]
Such loops were introduced by Moufang \cite{Moufang}  in 1934. 
The \textit{associator} $(a,b,c)$ of  elements $a,b,c$ is the unique element of $L$ satisfying the equation: $ab.c\,=\,(a.bc)(a,b,c)$. 
\begin{thm}\label{mout}[Moufang's Theorem \cite{hala}]
Let $M$ be a Moufang loop. If  $a,b,c \in M$ such that $(a,b,c)=1$ then $a,b,c$ generate a subgroup of $M$.
\end{thm}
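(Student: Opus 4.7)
The plan is to prove this result in two main phases: first establish diassociativity of Moufang loops, and then use the hypothesis $(a,b,c)=1$ together with the Moufang identities to promote diassociativity to full associativity on the subloop $\langle a,b,c\rangle$.

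For the first phase, I would show that for any two elements $x, y$ of a Moufang loop $M$, the subloop $\langle x, y \rangle$ is a group. Setting $y=1$ in the first Moufang identity yields the left alternative law $x(xz)=(xx)z$; setting $z=1$ in the second yields the right alternative law $y(xx)=(yx)x$; and setting $z=1$ in the first yields the flexible law $x(yx)=(xy)x$. Combined with the inverse property (a known consequence of the Moufang identities), an induction on word length lets every parenthesization of a product of $x$'s, $y$'s, and their inverses be reduced to a unique canonical form.

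For the second phase, I would first observe that in any Moufang loop, transposing two arguments of the associator inverts it, so $(a,b,c)=1$ implies $(\sigma(a),\sigma(b),\sigma(c))=1$ for every permutation $\sigma$ of $\{a,b,c\}$. I would then argue by induction on the total number of letters in a parenthesized word $W$ in $a,b,c$ that all bracketings of the same underlying sequence coincide. For the inductive step, write $W = U \cdot V$ with $U$ and $V$ shorter, represent each in canonical form by the induction hypothesis, and then use the Moufang identities as rewriting rules, together with diassociativity applied inside the two-generator subloops $\langle a,b\rangle$, $\langle b,c\rangle$, and $\langle a,c\rangle$, to migrate parentheses into the desired position.

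The main obstacle is controlling the rebracketing in the inductive step: when both $U$ and $V$ contain more than one distinct letter, the Moufang identities do not apply directly, and one must peel off factors one at a time using the alternative and flexible laws while ensuring that no previously unverified associator is introduced. The key technical point is that the hypothesis $(a,b,c)=1$ propagates: any associator whose three entries lie respectively in $\langle a,b\rangle$, $\langle b,c\rangle$, and $\langle a,c\rangle$ can be shown to remain trivial. Verifying this propagation through a careful case analysis on the Moufang identities is the delicate heart of the argument and is what makes Moufang's theorem substantially deeper than mere diassociativity.
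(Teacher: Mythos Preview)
The paper does not prove Theorem~\ref{mout} at all: it is stated as a classical result and attributed to \cite{hala}, with no proof environment following the statement. Consequently there is no ``paper's own proof'' to compare your proposal against; the authors simply invoke Moufang's theorem as background and immediately note diassociativity as a corollary.

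As for your outline on its own merits, it follows the broad shape of the classical argument (alternative and flexible laws, inverse property, skew-symmetry of the associator, then an induction on word length), but what you have written is a plan rather than a proof, and you yourself flag the essential difficulty. Two remarks. First, your Phase~1 is a special case of the theorem you are proving (take $c=1$, so that $(a,b,1)=1$ trivially); there is nothing circular in proving the two-generator case first, but the ``induction on word length'' you invoke there is already nontrivial and needs the same associator-propagation machinery as the general case. Second, the decisive step in every known proof is an associator identity of the type that expresses $(uv,y,z)$ in terms of $(u,y,z)$ and $(v,y,z)$ (or an equivalent autotopism argument); your final paragraph gestures at this but does not supply it, and without it the induction does not close. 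If you intend to write out a full proof, that identity is what you must establish and then exploit.
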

In view of Theorem \ref{mout}, every Moufang loop is \it diassociative\rm, that is, any two of its elements generate a group.
However, Theorem \ref{mout} was formulated for Moufang loops. 
We consider its statement for another class of loops, namely, for the variety of Steiner loops. 

\section {STEINER LOOPS AND MOUFANG'S THEOREM}

\begin{deff}
A loop $L$ \textit{satisfies Moufang's Property}, $\mathcal{MP}$, if $L$ is not Moufang loop, but it satisfies the statement of Moufang's theorem, i.e., if $a,b,c \in L$ such that $(a,b,c)=1$ then $a,b,c$ generate a subgroup of $L$.
\end{deff}

It is known that there exists only one Steiner loop $S$ of order $10$. 
We prove that this Steiner loop $S$ satisfies Moufang's Property $\mathcal{MP}$. 
Its Cayley table can be found, for example, using the GAP Library \cite{gap}, as seen below:

\begin{center}
\begin{tabular}{|c|c|c|c|c|c|c|c|c|c|c|c|c|c|c|c|c|}
\hline
  1& 2& 3& 4& 5& 6& 7& 8& 9& 10 \\
\hline
   2& 1& 4& 3& 8& 10& 9& 5& 7& 6 \\
\hline
   3& 4& 1& 2& 10& 9& 8& 7& 6& 5 \\
\hline
  4& 3& 2& 1& 9& 8& 10& 6& 5& 7 \\
\hline
   5& 8& 10& 9& 1& 7& 6& 2& 4& 3 \\
\hline
  6& 10& 9& 8& 7& 1& 5& 4& 3& 2 \\
\hline
   7& 9& 8& 10& 6& 5& 1& 3& 2& 4 \\
\hline
 8& 5& 7& 6& 2& 4& 3& 1& 10& 9 \\
\hline
   9& 7& 6& 5& 4& 3& 2& 10& 1& 8 \\
\hline
 10& 6& 5& 7& 3& 2& 4& 9& 8& 1 \\

\hline
\end{tabular}
\end{center}

For any $x,y,z \in S$, such that $x \neq y; \,\, y \neq z; \,\, z \neq x$,
 $x \neq 1,y \neq 1, z \neq 1, \,\,\,\,x.yz\,=\,xy.z$ implies that $z\,=\,xy$. 
So  $<x,y,z>\,=\, <x,y>$, and hence $x,y,z$ generate a group.

A \textit{Steiner triple system} $(Q,{\mathcal B})$, or \textit{STS(n)}, is  a non-empty set $Q$ with $n$ elements and a set $\mathcal B$ of unordered triples $\{a,b,c\}$ such that
\begin{itemize}
\item[(i)] $a,b,c$ are distinct elements of $Q$;
\item[(ii)] when $a,b \in Q$ and $a \neq b$, there exists a unique triple $\{a,b,c\} \in {\mathcal B}$.
\end{itemize}
A Steiner triple system $(Q,{\mathcal B})$ with $|Q|=n$ elements exists if and only if $n\geq 1$ and $n\equiv 1 \,\,or \,\,3 \,\,\,(\textrm{mod}\,\, 6)$ \cite{lint}.
Because there is a one-to-one correspondence between the variety of Steiner triple systems and the 
variety of all Steiner Loops \cite{gp},  Steiner loops have order $m\equiv 2 \,\,or \,\,4 \,\,\,(\textrm{mod}\,\, 6)$. 
This underlies the study of Steiner triple systems from an algebraic point of view as in \cite{hala}, \cite{ss} and \cite{ss1}.

We use the following standard construction of Steiner triple systems \cite{lint}, sometimes called the {\sl Bose construction}.
Let $n= 2t +1$ and define  $Q:= {\mathbb{Z}}_n \times {\mathbb{Z}}_3$. A Steiner triple system $(Q,{\mathcal B})$ can be formed with ${\mathcal B}$ consisting of the following triples 
\[ \begin{array}{lll}
\{(x,0),(x,1),(x,2)\} &  \text {where}  & x \in {\mathbb{Z}}_n,  \mbox{ and}   \\
 \{(x,i),(y,i),(\frac{x+y}{2}, i+1)\}   &  \text{where}   & x \neq y; x,y \in {\mathbb{Z}}_n, i \in {\mathbb Z}_3  
\end{array} \]

The corresponding Steiner loops can be defined directly.
Let $S= Q \cup \{1\}$. 
Define a binary operation $*$ with identity element $1$ as follows:
\[ \begin{array}{rcll}
(x,i) * (x,j)&=& ( x, k)& i \neq j, \, i \neq k, \, j \neq k \\
(x,i) * (y,i)&=& ( \frac{x+y}{2}, i +1)&x \neq y \\
(x,i) * (y,i+1)&=& ( 2y - x,i)&x \neq y \\
(x,i) * (y,i-1)&=& ( 2x - y,i-1)&x \neq y \\
(x,i) * (x,i)&=& 1 & 
\end{array} \] 
Then $(S,*)$ is commutative loop.

Analyzing Steiner loops from the Bose construction, there are two types: one that satisfies $\mathcal{MP}$, and another that does not.
Using computer calculations and the {\tt Loops} package in GAP \cite{gap}, first we studied the Steiner loops of order $k$ with $k \in M_1$ where 
\begin{multline*}
M_1=\{ 16, 28,34,40,46,52,58,79,76,82,\\
88,94,100,112,118,124,130,136,142,154\}
\end{multline*}
 from the Bose construction. 
Each of these Steiner loops satisfies $\mathcal{MP}$.
However, none of the Steiner loops of order $k$ from the Bose construction with $k  \in \{22,64,106, 148\}$ satisfies $\mathcal{MP}$.  The explanation for this follows. 

\begin {thm}
Let $S$ be a Steiner loop from the Bose construction. Then  $S$ has the property $\mathcal{MP}$ if and only if $7$ is an invertible element in ${\mathbb{Z}}_n$.
\end{thm}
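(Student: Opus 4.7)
The plan is a case analysis on the $\mathbb{Z}_3$-layers of $a, b, c$, computing $(a, b, c) = (a \cdot bc)^{-1}(ab \cdot c)$ from the multiplication rules of $*$; since every element is an involution, $(a, b, c) = 1$ is equivalent to $a(bc) = (ab)c$. Write $a = (x_a, \alpha), b = (x_b, \beta), c = (x_c, \gamma)$. A direct inspection shows that for most layer triples $(\alpha, \beta, \gamma)$ with generic $x$-coordinates, the elements $a(bc)$ and $(ab)c$ land in distinct $\mathbb{Z}_3$-layers of $Q$, hence can coincide only by each collapsing to $1$; this forces $\{a, b, c\}$ to be a Bose triple, so $\{1, a, b, c\}$ is automatically a subgroup. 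The non-trivial cases are those in which, after imposing one coincidence among $x_a, x_b, x_c$, the operands $a(bc)$ and $(ab)c$ end up in the same layer of $Q$, producing a genuine algebraic condition for their equality.

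Up to the symmetries of the Bose construction (commutativity and cyclic permutation of the $\mathbb{Z}_3$-layers), the key such configuration is $(\alpha, \beta, \gamma) = (0, 1, 2)$ with $x_a = x_b$. Then $ab = (x_a, 2)$, and direct computation gives $a(bc) = (4x_c - 3x_a, 0)$ and $(ab)c = \bigl(\frac{x_a + x_c}{2}, 0\bigr)$. Setting these equal and clearing denominators yields $8x_c - 6x_a \equiv x_a + x_c \pmod n$, i.e., $7(x_c - x_a) \equiv 0 \pmod n$; this is the source of the $7$ in the theorem. Other same-layer configurations (for instance $(\alpha, \beta, \gamma) = (0, 0, 0)$) give equations such as $2x_a \equiv 2x_c \pmod n$, which, because $n$ is odd, already force $a = c$ independently of $7$.

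For the forward implication, if $7$ is invertible in $\mathbb{Z}_n$ then the critical congruence $7(x_c - x_a) \equiv 0$ collapses to $x_c = x_a$; combined with $x_a = x_b$ this yields $x_a = x_b = x_c$, so $\{a, b, c\} = \{(x, 0), (x, 1), (x, 2)\}$ is a Type-$1$ Bose triple and $\{1, a, b, c\}$ is a subgroup. The analogous reduction in every remaining layer configuration shows that each triple with trivial associator generates a subgroup, whence $\mathcal{MP}$ holds.

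For the converse, suppose $7 \mid n$ and set $t = n/7$; then $t \neq 0$ but $7t \equiv 0 \pmod n$. Take $a = (0, 0), b = (0, 1), c = (t, 2)$: these are pairwise distinct, and the computation above gives $(a, b, c) = 1$. Since any subgroup of a Steiner loop is elementary abelian of exponent $2$, a subgroup generated by three elements has order at most $8$, so to show $\langle a, b, c\rangle$ is not a subgroup it suffices to exhibit a ninth element. Computing $(ab)(bc) = (0, 2) * (2t, 1) = (-2t, 1)$, the only layer-$1$ members of the candidate set $\{1, a, b, c, ab, ac, bc, abc\}$ are $(0, 1)$ and $(2t, 1)$; and the new element $(-2t, 1)$ differs from both, since $2t \not\equiv 0$ and $4t \not\equiv 0 \pmod n$ (as $n = 7t$ is odd and $7 \nmid 4$). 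Hence $|\langle a, b, c\rangle| > 8$, so $\langle a, b, c\rangle$ is not a subgroup, witnessing the failure of $\mathcal{MP}$. The principal obstacle is the exhaustive case analysis underpinning the first two paragraphs: one must verify, for every layer triple and every coincidence pattern among $x_a, x_b, x_c$, that either the associator is non-trivial or the triple already lies in an obvious subgroup.
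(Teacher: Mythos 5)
Your converse direction is correct and takes a genuinely different route from the paper. The paper takes the ordered triple $(0,1),(0,0),(a,0)$ with $7a=0$, $a\neq 0$, and observes that its associator vanishes in one ordering but not in another, which is impossible for elements of a group. You instead take $a=(0,0)$, $b=(0,1)$, $c=(t,2)$ with $t=n/7$, verify $(a,b,c)=1$, and produce a ``ninth element'': any subgroup of a Steiner loop generated by three elements is elementary abelian of exponent $2$ with underlying set contained in $\{1,a,b,c,ab,ac,bc,abc\}$, whereas $(ab)(bc)=(-2t,1)$ lies outside that set (your computations $ab=(0,2)$, $bc=(2t,1)$, $ac=(-t,2)$, $abc=(4t,0)$ are correct, and $2t,4t\not\equiv 0 \pmod n$ since $n$ is odd). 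Both certificates are valid; the paper's is shorter, yours makes the failure of closure completely explicit. (Like the paper, you do not verify the ``$S$ is not Moufang'' half of $\mathcal{MP}$ in the forward direction; this is a shared omission, easily repaired since a commutative Moufang loop of exponent $2$ is associative while the Bose loops contain non-associating triples.)

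The genuine gap is in your forward direction. You follow the same strategy as the paper --- case analysis over layer patterns and coincidences among $x_a,x_b,x_c$, with $7(x_c-x_a)\equiv 0 \pmod n$ emerging from the critical case --- but your claim that, up to commutativity and cyclic permutation of the $\mathbb{Z}_3$-layers, the only configuration producing a genuine condition is $(\alpha,\beta,\gamma)=(0,1,2)$ with $x_a=x_b$ is not correct, and it is exactly what lets you skip the work. The paper's enumeration ($59$ ordered types: $37$ with nontrivial associator, $14$ Bose-triple cases, $8$ critical ones) leaves eight critical configurations that are not mutually equivalent under those symmetries: for example $\bigl((x,i),(y,i),(y,i+1)\bigr)$ has layer pattern $(i,i,i+1)$ with $x_b=x_c$, and $\bigl((x,i),(y,i),(z,i-1)\bigr)$ with $x=2y-z$, $z\neq\frac{x+y}{2}$ has three distinct first coordinates tied by a linear relation; neither is reached from a distinct-layers, $x_a=x_b$ configuration by layer rotation, and commutativity of $*$ does not permute the arguments of the associator for you. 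These cases do in fact also collapse via a $7$-congruence --- e.g.\ $(x,i)\bigl((y,i)(y,i+1)\bigr)=(2x-y,\,i-1)$ while $\bigl((x,i)(y,i)\bigr)(y,i+1)=\bigl(\tfrac{x+3y}{4},\,i-1\bigr)$, equal iff $7x\equiv 7y$ --- but your argument never establishes this, and you yourself defer the exhaustive check as ``the principal obstacle.'' As written, the forward implication is a correct plan plus one worked case; the content of the paper's proof is precisely the verification you postpone, so that verification (or a correct symmetry argument covering all eight critical types) must be supplied.
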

\begin{proof}
Suppose $S$ has property $\mathcal{MP}$. If $7$ is not invertible in ${\mathbb{Z}}_n $, then exists an element $a \in {\mathbb{Z}}_n $, $a \neq 0$    such that $7a=0$. 
Hence $8a=a$.  Because $n$ is odd, $2a=a/4$.
The associator $((0,1),(0,0),(a,0))\, =\,1 $ while $((0,1),(a,0),(0,0)) \neq 1$, thus the elements $(0,1),(0,0),(a,0) $ associate in some order, but not in every order, a contradiction. 

Now, suppose that $7$ is invertible in ${\mathbb{Z}}_n $. We consider all possible triples of elements of $S$. Our strategy is to show that if the associator $(a,b,c) = 1$, then $a,b,c$ are in the same  triple. 
There are $25$ generic triple elements of $S$;  here $x,y,z \in \mathbb{Z}_n$ are distinct  and $i,j,k \in \mathbb{Z}_3$ are distinct:

\hspace{-0.6cm}$\{(x,i),(x,i),(x,i)\}, \{(x,i),(x,i),(x,j)\},\{(x,i),(x,i),(y,i)\},\{(x,i),(x,i),(y,j)\}$,
\\
$\{(x,i),(x,j),(x,i)\}, \{(x,i),(x,j),(x,j)\},\{(x,i),(x,j),(y,i)\},\{(x,i),(x,j),(y,j)\}$,
\\
$\{(x,i),(x,j),(x,k)\},\{(x,i),(x,j),(y,k)\}$, $\{(x,i),(y,i),(x,i)\}, \{(x,i),(y,i),(x,j)\}$,
\\
$\{(x,i),(y,i),(y,i)\},\{(x,i),(y,i),(y,j)\}, \{( x,i),(y,i),(z,i)\}, \{(x,i),(y,i),(z,j)\}$,
\\
$\{(x,i),(y,j),(x,i)\},\{(x,i),(y,j),(x,j)\},\{(x,i),(y,j),(y,i)\}, \{(x,i),(y,j),(y,j)\}$,
\\
$\{(x,i),(y,j),(z,i)\},\{(x,i),(y,j),(z,j)\},\{(x,i),(y,j),(x,k)\}, \{(x,i),(y,j),(y,k)\}$,
\\
$\{(x,i),(y,j),(z,k)\}$.

When we consider $j \neq i,\,\,j \neq k,\,\, k \neq i$, we assume that $j=i+1$ and $k=i - 1$ or $j=i- 1$ and $k=i + 1$. 
We identify $59$ different sets of triples of elements. There are $37$ triples in which the associator is different from $1$, as follows:

\hspace{-0.6cm}$\{(x,i),(x,i+1),(y,i)\}, \{(x,i),(x,i-1),(y,i)\},\{(x,i),(x,i+1),(y,i+1)\}$,
\\
$\{(x,i),(y,i),(x,i+1)\},\{(x,i),(y,i),(x,i-1)\},\{(x,i),(y,i),(y,i-1)\}$,
\\
$\{(x,i),(y,i),(z,i), $ where $z \not = \frac{x+y}{2} $ and $ x\not = \frac{y+z}{2}\}$,
\\
$\{(x,i),(y,i),(z,i), $ where $z \not = \frac{x+y}{2} $ and $ x = \frac{y+z}{2}\}$,
\\
$\{(x,i),(y,i),(z,i), $ where $z = \frac{x+y}{2} $ and $ x\not = \frac{y+z}{2}\}$,
\\
$\{(x,i),(y,i),(z,i), $ where $z  = \frac{x+y}{2} $ and $ x = \frac{y+z}{2}\}$,
\\
$\{(x,i),(y,i),(z,i+1), $ where $z \not = \frac{x+y}{2} \},\{(x,i),(y,i+1),(x,i+1)\},$
\\
$\{(x,i),(y,i),(z,i-1), $ where $z \not = \frac{x+y}{2} $ and $ x\not = 2y-z\}$,
\\
$\{(x,i),(y,i),(z,i-1), $ where $z  = \frac{x+y}{2} $ and $ x\not = 2y-z\}$,
\\
$\{(x,i),(y,i),(z,i-1), $ where $z  = \frac{x+y}{2} $ and $ x = 2y-z\},$
\\
$\{(x,i),(y,i-1),(x,i-1)\},\{(x,i),(y,i+1),(y,i)\},\{(x,i),(y,i-1),(y,i)\}$,
\\
$\{(x,i),(y,i+1),(z,i), $ where $z \not = 2y-x\}$,
\\
$\{(x,i),(y,i-1),(z,i), $ where $z  \not = 2x-y $ and $ x \not= 2z-y\},$
\\
$\{(x,i),(y,i-1),(z,i), $ where $z  \not = 2x-y $ and $ x = 2z-y\},$
\\
$\{(x,i),(y,i-1),(z,i), $ where $z   = 2x-y $ and $ x \not= 2z-y\},$
\\
$\{(x,i),(y,i-1),(z,i), $ where $z   = 2x-y $ and $ x = 2z-y\},$
\\
$\{(x,i),(y,i+1),(z,i+1), $ where $z  \not = 2y-x $ and $ x \not= \frac{y+z}{2}\},$
\\
$\{(x,i),(y,i+1),(z,i+1), $ where $z  \not = 2y-x $ and $ x = \frac{y+z}{2}\},$
\\
$\{(x,i),(y,i+1),(z,i+1), $ where $z   = 2y-x $ and $ x = \frac{y+z}{2}\},$
\\
$\{(x,i),(y,i-1),(z,i-1), $ where $z  \not = 2x-y \},\{(x,i),(y,i+1),(x,i-1)\},$
\\
$\{(x,i),(y,i-1),(x,i+1)\},\{(x,i),(y,i+1),(y,i-1)\},$
\\
$\{(x,i),(y,i+1),(z,i-1), $ where $z  \not = 2y-x $ and $ x \not= 2z-y\},$
\\
$\{(x,i),(y,i+1),(z,i-1), $ where $z   = 2y-x $ and $ x \not= 2z-y\},$
\\
$\{(x,i),(y,i+1),(z,i-1), $ where $z  = 2y-x $ and $ x = 2z-y\},$
\\
$\{(x,i),(y,i-1),(z,i+1), $ where $z  \not = 2x-y $ and $ x \not= 2y-z\},$
\\
$\{(x,i),(y,i-1),(z,i+1), $ where $z  \not = 2x-y $ and $ x = 2y-z\},$
\\
$\{(x,i),(y,i-1),(z,i+1), $ where $z   = 2x-y $ and $ x = 2y-z\},$
\\
$\{(x,i),(x,i-1),(y,i+1)\}$.

Next, there are  $14$ triples for which the associator is $1$ and they are in the same triple of the STS; consequently, they are in a Klein group (and so generate a subgroup). 

\hspace{-0.6cm}$\{(x,i),(x,i),(x,i)\}, \{(x,i),(x,i),(x,j)\},\{(x,i),(x,i),(y,i)\},\{(x,i),(x,i),(y,j)\}$,
\\
$\{(x,i),(x,j),(x,i)\}, \{(x,i),(x,j),(x,j)\},\{(x,i),(y,i),(x,i)\},\{(x,i),(y,i),(y,i)\}$,
\\
$\{(x,i),(y,i),(z,i+1), $ where $z = \frac{x + y}{2}\},\{(x,i),(y,j),(x,i)\},\{(x,i),(y,j),(y,j)\}$,
\\
$\{(x,i),(y,i+1),(z,i), $ where $z = 2y-x\},
\\
\{(x,i),(y,i-1),(z,i-1), $ where $z = 2x-y\},\{(x,i),(x,j),(x,k)\}$

There remain $8$ cases to consider:

\hspace{-0.6cm}$\{(x,i),(x,i-1),(y,i-1)\}, \{(x,i),(y,i),(y,i+1)\},$
\\
$\{(x,i),(y,i-1),(y,i+1)\},\{(x,i),(x,i+1),(y,i-1)\}$,
\\
$\{(x,i),(y,i),(z,i-1), $ where $ x = 2y - z, z \not =\frac{x + y}{2} \},$
\\
$\{(x,i),(y,i+1),(z,i+1) $ where $ z = 2y - x, x \not =\frac{y + z}{2}\},
\\
\{(x,i),(y,i+1),(z,i-1) $ where $ z \not = 2y - x, x = 2z - y\}$,
\\
$\{(x,i),(y,i-1),(z,i+1) $ where $ z = 2x - y, x \not = 2y - z\}$

Each has associator different from $1$ because $7$ is invertible in $\mathbb{Z}_n$.
Take for instance the triple $\{(x,i),(x,i+1),(y,i-1)\}$ with $x \neq y$ of the STS.
Now $(x,i)*((x,i+1)*(y,i-1)) = (4y - 3x,i)$ and
$((x,i)*(x,i+1))*(y,i-1) = (\frac{x+y}{2},i)$.
The associator $((x,i),(x,i+1),(y,i-1))$ is $1$ if and only if $7x = 7y$. 
Because  $7$ is  invertible in $\mathbb{Z}_n$, we obtain $x = y$, a contradiction.
\end{proof}

\section{BEYOND STEINER LOOPS}

We have seen that certain Steiner loops from the Bose construction provide examples of loops satisfying $\mathcal{MP}$.  
Further examples can be obtained by the direct product of loops, the proof of which is straightforward:

\begin{lm}
Let $S$ and $M$ be loops that satisfy Moufang's theorem.  Then $S \times M$ satisfies Moufang's theorem, and $S \times M$ satisfies $\mathcal{MP}$ if one or both of $S$ and $M$ satisfy $\mathcal{MP}$.
\end{lm}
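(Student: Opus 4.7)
The plan is to exploit the fact that in the direct product $S \times M$ the multiplication, and therefore the associator, is computed coordinatewise. First I would verify that for any $(a_1,a_2),(b_1,b_2),(c_1,c_2) \in S \times M$ one has
\[ \bigl((a_1,a_2),\,(b_1,b_2),\,(c_1,c_2)\bigr) \;=\; \bigl((a_1,b_1,c_1),\,(a_2,b_2,c_2)\bigr), \]
so this associator equals the identity $(1_S,1_M)$ if and only if $(a_1,b_1,c_1)=1_S$ in $S$ and $(a_2,b_2,c_2)=1_M$ in $M$.

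With this in hand, the Moufang's-theorem half is immediate. Whenever the associator in $S \times M$ vanishes, the hypothesis on the factors yields that $H_1 := \langle a_1,b_1,c_1 \rangle$ is a subgroup of $S$ and $H_2 := \langle a_2,b_2,c_2 \rangle$ is a subgroup of $M$. The subloop generated by the three pairs is then contained in $H_1 \times H_2$, which is a group; since any subloop of a group inherits associativity and two-sided inverses, the subloop generated is itself a subgroup. Hence $S \times M$ satisfies the statement of Moufang's theorem.

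For the $\mathcal{MP}$ half, it remains to verify that $S \times M$ is not a Moufang loop as soon as, say, $S$ fails to be Moufang. This is a one-line check: choose $x,y,z \in S$ witnessing the failure of one of the three Moufang identities and lift to $(x,1_M),(y,1_M),(z,1_M) \in S \times M$; the first coordinate reproduces the failure, so the identity fails in the product. Combined with the previous paragraph, $S \times M$ then satisfies $\mathcal{MP}$.

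The only mildly delicate point is the assertion that the subloop of $S \times M$ generated by the three pairs, being contained in $H_1 \times H_2$, is actually a subgroup. I would dispose of it with the standard observation that the identity of any subloop coincides with the identity of the ambient loop, associativity transfers automatically from $H_1 \times H_2$, and uniqueness of two-sided inverses in a group forces loop-inverses to coincide with group-inverses. Everything else in the argument is bookkeeping on coordinates.
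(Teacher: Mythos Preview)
Your argument is correct. The paper itself gives no proof of this lemma, declaring it ``straightforward,'' so your coordinatewise computation of the associator together with the observation that a subloop of a group is automatically a subgroup is exactly the kind of verification the authors had in mind.
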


Taking $S$ to satisfy $\mathcal{MP}$ and $M$ to be a group or a Moufang loop provides numerous examples of loops that satisfy $\mathcal{MP}$ but are neither Steiner nor Moufang loops.
A characterization of loops that satisfy Moufang's theorem must therefore consider loops beyond the varieties examined here.


\begin{thebibliography}{100}




\bibitem{gp} B. Ganter, U. Pf\"uller, \textit{A remark on commutative di-associative loops},
Algebra Universalis, 21, (1985), 310-311.

\bibitem{Moufang} R.Moufang, Zur Struktur von Alternativk\"orpern, Math. Ann. 110 (1935), 416-430.

\bibitem{hala} H.O.Pflugfelder, \textit{Quasigroups and Loops: Introduction}, Heldermann Verlag, 1990.

\bibitem{ss} K.Strambach, I.Stuhl, \textit{Translation Group of Steiner Loop}, Discrete Mathematics, (2009), 4225-4227.

\bibitem{ss1} K.Strambach, I.Stuhl, \textit{Oriented Steiner Loop}, Beitr Algebra Geom, (2013), 131-145.

\bibitem{lint} J.H.Van Lint, R.M.Wilson; \textit{A Course in Combinatorics}; Camb. Uni. Press, 2001.

\bibitem{gap} The GAP Group, GAP - Groups, Algorithms, and Programming, Version 4.4.12; 2008.
(http://www.gap-system.org)


\end{thebibliography}
\end{document}